\newcommand\de{\text{d}}
\newcommand\eq[1]{(\ref{eq:#1})}
\newtheorem{theorem}{Theorem}
\newtheorem{lemma}{Lemma}
\newtheorem{corollary}{Corollary}
\begin{document}

\title{Synchronized L\'evy Queues}
\author{Offer Kella\thanks{
Department of Statistics, The Hebrew University of Jerusalem, Mount Scopus, Jerusalem 91905, Israel (offer.kella@huji.ac.il);
supported in part by grant 1647/17 from the Israel Science Foundation and the Vigevani Chair in Statistics}
\ \ and\ \
Onno Boxma\thanks{
EURANDOM and Department of Mathematics and Computer Science, Eindhoven
University of Technology, P.O. Box 513, 5600 MB Eindhoven, The
Netherlands (o.j.boxma@tue.nl); research partly funded by an NWO TOP grant, Grant Number 613.001.352, and by
the NWO Gravitation project NETWORKS, Grant Number 024.002.003}}

\maketitle

\begin{abstract}
We consider a multivariate L\'evy process
where the first coordinate is a L\'evy process with no negative jumps which is not a subordinator and the others are nondecreasing.
We determine the Laplace-Stieltjes transform of the steady-state buffer content vector of an associated system of parallel queues.
The special structure of this transform allows us to rewrite it as a product of joint Laplace-Stieltjes transforms.
We are thus able to interpret the buffer content vector as a sum of independent random vectors.
\end{abstract}

\bigskip
\noindent {\bf Keywords:} L\'evy driven queues. Synchronized L\'evy queues. Decomposition.

\bigskip
\noindent {\bf AMS Subject Classification (MSC2010):} 60G51, 60K25, 90B05.

\section{Introduction}
\label{intro}
We consider $n$ parallel stations or queues which process some material that we call {\em fluid}.
The inputs to the stations are correlated. The net input vector is denoted by
$Y=\{(Y_1(t),\ldots,Y_n(t))|\,t\ge 0\}$, where $Y_i(t) = \sum_{j=1}^i X_j(t)$, $1 \le i \le n$, and where
$X_1$ is a L\'evy process with no negative jumps which is not a subordinator and $X_2,\dots,X_n$ are subordinators which are not identically zero.
Put differently: station $1$ receives a net input process $X_1$, and station $2$ receives {\em in addition} $X_2$, and station $3$ receives on top of that also $X_3$, etc.
Since the processes $X_2,\dots,X_n$ are subordinators, i.e., non-decreasing L\'evy processes, the net input to station $i$ is at least as large as the net input to station $i-1$,
$2 \le i \le n$.

This model of $n$ parallel stations generalizes the model studied in \cite{kella1993} in two respects. Firstly,
we do not require $X_1$ to be a subordinator minus a linear drift. Secondly, throughout the paper we allow $X_1,\dots,X_n$ to be dependent,
whereas in \cite{kella1993} independence was assumed in deriving some of its results.
Our model also generalizes the model of $n$ parallel queues studied in \cite{bbrw2014}, because the latter paper
restricts itself to compound Poisson inputs (and hence the net inputs are compound Poisson processes minus linear drifts). The steady state workload decomposition that we eventually identify is also related to results developed in \cite{ddr2007}.
For further literature on fluid networks we refer to the mini-survey \cite{BoxmaZwart} and references therein;
for linear stochastic fluid networks, see, e.g., \cite{kw1999}.
Background material on L\'evy processes with some emphasis on related applications can be found in, e.g.,
\cite{DebickiMandjes,Kyprianou}.

Our main results are the following.
We determine the Laplace-Stieltjes transform of the steady-state buffer content vector for the system of $n$ parallel queues.
The special structure of this buffer content transform allows us to rewrite it as a product of $n$ joint Laplace-Stieltjes transforms.
Each term of the product is given a probabilistic interpretation.
We are thus able to interpret the buffer content vector as a sum of $n$ independent random vectors.

The paper is organized as follows.
Section~\ref{model} contains a detailed model description and some preliminary results regarding the Laplace exponent of the multivariate L\'evy
process $X$. The main results are derived in Section~\ref{main}.
Two remarks at the end of that section point out that our main results
are also of immediate relevance for a tandem fluid model, a priority queue and a multivariate insurance risk model.

\section{The model and preliminaries}
\label{model}

Let $X=\{(X_1(t),\ldots,X_n(t))|\,t\ge 0\}$
be a multivariate L\'evy process with $X(0)=0$, where $X_1$ is a L\'evy process with no negative jumps which is not a subordinator and $X_i$ is a subordinator which is not identically zero for each $2\le i\le n$.
We have $E[{\rm e}^{-\alpha^T X(t)}] = {\rm e}^{\varphi(\alpha)t}$, where
the Laplace exponent $\varphi(\alpha)$ has the form
\begin{align}
\varphi(\alpha)=-a^T\alpha+\frac{\sigma^2}{2}\alpha_1^2+\int_{\mathbb{R}_+^n}\left(e^{-\alpha^T x}-1+\alpha^T x1_{(0,1]^n}(x)\right)\nu(\de x) ,
\end{align}
where if we denote $\nu_i(A)=\nu(\mathbb{R}_+^{i-1}\times A\times\mathbb{R}_+^{n-i})$ for $1 \leq i \leq n$, then $\int_{\mathbb{R}_+}(x^2_1\wedge 1)\nu_1(\de x_1)<\infty$ and $\int_{\mathbb{R}_+}(x_i\wedge 1)\nu_i(\de x_i)<\infty$ for $2\le i\le n$. For this setup we can rewrite the Laplace exponent in the following way:
\begin{align}
\varphi(\alpha)=-c^T\alpha+\frac{\sigma^2}{2}\alpha_1^2+\int_{\mathbb{R}_+^n}\left(e^{-\alpha^T x}-1+\alpha_1 x_1 1_{(0,1]}(x_1)\right)\nu(\de x) ,
\end{align}
where $c_1=a_1+\int_{(0,1]\times(\mathbb{R}_+^{n-1}\setminus(0,1]^{n-1})}x_1\nu(\de x_1)$ and $c_i=a_i-\int_{(0,1]^n}x_i\nu(\de x)$ for $2\le i\le n$.
Letting
\begin{align}
\varphi_1(\alpha_1)&=\varphi(\alpha_1,0,\ldots,0)\\
&=-c_1\alpha_1+\frac{\sigma^2}{2}\alpha_1^2+
\int_{\mathbb{R}_+}\left(e^{-\alpha_1x_1}-1+\alpha_1x_11_{(0,1]}(x_1)\right)\nu_1 (\de x_1)
\nonumber\end{align}
gives that
\begin{equation}
\varphi(\alpha)=\varphi_1(\alpha_1)-\sum_{i=2}^nc_i\alpha_i-\int_{\mathbb{R}_+^n}
e^{-\alpha_1x_1}\left(1-e^{-\sum_{i=2}^n\alpha_ix_i}\right)\nu(\de x)\ .
\label{eq4}
\end{equation}
It should be noted that the integral on the right is finite for every choice of $\alpha\in \mathbb{R}_+^n$. This follows from the fact that
\begin{equation}
e^{-\alpha_1x_1}\left(1-e^{-\sum_{i=2}^n\alpha_ix_i}\right)\le \left(\sum_{i=2}^n\alpha_ix_i\right)\wedge 1\,,
\end{equation}
so that
\begin{align}
&\int_{\mathbb{R}_+^n}
e^{-\alpha_1x_1}\left(1-e^{-\sum_{i=2}^n\alpha_ix_i}\right)\nu(\de x)\nonumber\\ &\qquad\le
\sum_{i=2}^n\alpha_i\int_{(0,1]^n}x_i\nu(\de x)+\nu\left(\mathbb{R}_+^n\setminus(0,1]^n\right)\\
&\qquad\le \sum_{i=2}^n \alpha_i \int_{(0,1]}x_i\nu_i(\de x)+\nu\left(\mathbb{R}_+^n\setminus(0,1]^n\right)<\infty\,.\nonumber
\end{align}
We note that we may also write
\begin{equation}
\varphi(\alpha)=\varphi_1(\alpha_1)+\varphi(0,\alpha_2,\ldots,\alpha_n)
+\int_{\mathbb{R}_+^n}
\left(1-e^{-\alpha_1x_1}\right)\left(1-e^{-\sum_{i=2}^n\alpha_ix_i}\right)\nu(\de x)\,,
\end{equation}
observing that
\begin{equation}\label{eq:eta}
\varphi(0,\alpha_2,\ldots,\alpha_n)=-\sum_{i=2}^nc_i\alpha_i-\int_{\mathbb{R}_+^n}
\left(1-e^{-\sum_{i=2}^n\alpha_i x_i}\right)\nu(\de x)\equiv-\eta(\alpha_2,\ldots,\alpha_n)
\end{equation}
is the Laplace exponent of $(X_2,\ldots,X_n)$, which implies (since it is an ($n-1$)-dimensional subordinator) that necessarily $c_i\ge 0$ for $2 \le i \le n$.

In a similar manner, letting, for $\beta\in\mathbb{R}_+$ and $2\le k\le n$,
\begin{equation}
\varphi_k(\beta)=\varphi(\underbrace{\beta,\ldots,\beta}_k,\underbrace{0,\ldots,0}_{n-k})\,,
\end{equation}
we have that $\varphi_k$ is the Laplace exponent of $\sum_{i=1}^kX_i$, which is (because of $X_1$) not a subordinator, and, from (\ref{eq4}),
\begin{align}
\varphi(\underbrace{\beta,\ldots,\beta}_k,\alpha_{k+1},\ldots,\alpha_n)\nonumber
&=\varphi_k(\beta)-\sum_{i=k+1}^nc_i\alpha_i\\
&\quad-\int_{\mathbb{R}_+^n}e^{-\beta\sum_{i=1}^kx_i}\left(1-e^{-\sum_{i=k+1}^n\alpha_ix_i}\right)\nu(\de x)\ .
\label{eq10}
\end{align}

\section{The main results}
\label{main}

In this section we determine the Laplace-Stieltjes transform of the steady-state buffer content vector for the system of $n$ parallel queues
(Theorem~\ref{thm1} and Corollary~\ref{cor1}).
The special structure of this buffer content transform allows us to rewrite it as a product of $n$ joint Laplace-Stieltjes transforms.
Each term of the product is given a probabilistic interpretation.
We are thus able to interpret the buffer content vector as a sum of $n$ independent random vectors.
We end the section with a few remarks concerning the relations between the model under consideration and
some other multivariate stochastic models.

\begin{lemma}
For $k=1,\ldots,n-1$, there is a unique positive $\beta=\psi_k(\alpha_{k+1},\ldots,\alpha_n)$
with $\alpha_{k+1},\dots,\alpha_n \geq 0$
such that
\begin{equation}\label{eq:psik}
\varphi(\beta,\ldots,\beta,\alpha_{k+1},\ldots,\alpha_n)=0\ .
\end{equation}
\label{Lemma1}
\end{lemma}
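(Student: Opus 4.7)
The plan is to fix $(\alpha_{k+1},\ldots,\alpha_n)\in\mathbb{R}_+^{n-k}$ and study $f(\beta)=\varphi(\beta,\ldots,\beta,\alpha_{k+1},\ldots,\alpha_n)$ as a real-valued function of $\beta\ge 0$. The lemma reduces to establishing three properties: (i) $f$ is continuous and strictly convex on $[0,\infty)$; (ii) $f(0)\le 0$; (iii) $f(\beta)\to\infty$ as $\beta\to\infty$. Granted (i)--(iii), the intermediate value theorem produces at least one root, and the at-most-two-roots property of a strictly convex function combined with $f(0)\le 0$ pins down a unique root in $(0,\infty)$ provided $f(0)<0$; that root is $\psi_k(\alpha_{k+1},\ldots,\alpha_n)$.

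For (i), the cleanest route is to note that $tf(\beta)=\log E[\exp(-\beta\sum_{i=1}^k X_i(t)-\sum_{i=k+1}^n\alpha_i X_i(t))]$ is, up to a linear additive factor, the cumulant generating function of $\sum_{i=1}^k X_i(t)$ under an exponential tilt in the remaining coordinates, hence strictly convex in $\beta$ because $\sum_{i=1}^k X_i(1)$ is non-degenerate (either from the Brownian coefficient $\sigma$ or from the L\'evy measure inherited from $X_1$, which is not a subordinator). For (ii) I would simply read off from \eq{eta} that $f(0)=-\eta(\alpha_{k+1},\ldots,\alpha_n)\le 0$, with strict inequality whenever at least one $\alpha_i>0$, by the non-triviality of the subordinators $X_2,\ldots,X_n$.

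For (iii) I would use the decomposition (\ref{eq10}). The non-negative integral appearing there is bounded uniformly in $\beta\ge 0$ by dropping the $e^{-\beta\sum x_i}$ factor and invoking the same finite estimate already derived just after equation (\ref{eq4}). This gives $f(\beta)\ge \varphi_k(\beta)-C$ for a constant $C$ depending only on $k$ and $(\alpha_{k+1},\ldots,\alpha_n)$, and the task reduces to showing $\varphi_k(\beta)\to\infty$. But $\varphi_k$ is the Laplace exponent of the spectrally positive L\'evy process $\sum_{i=1}^k X_i$, which is not a subordinator, and any such exponent diverges to $+\infty$: if $\sigma>0$ the quadratic term does it, while otherwise one verifies from the L\'evy--Khintchine form that the linear and integral contributions together still force divergence (this is the standard observation that $E[e^{-\beta Y}]\to\infty$ whenever $Y$ puts mass on $(-\infty,0)$).

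The main subtlety I foresee is really the edge case $(\alpha_{k+1},\ldots,\alpha_n)=0$, where $f(0)=\varphi_k(0)=0$ so that $\beta=0$ is already a root and the existence of a strictly positive root becomes a question about the sign of $\varphi_k'(0)=-E[\sum_{i=1}^k X_i(1)]$. Under the stability assumption appropriate for the queueing model, $\varphi_k'(0)>0$ and no strictly positive root exists, so $\psi_k$ at the origin must be interpreted as $0$ in the standard right-inverse sense for Laplace exponents of spectrally positive L\'evy processes. Aside from this conventional point, the crucial ingredient is strict convexity (not mere convexity) of $f$, since it rules out the convex function $f$ touching zero on a whole interval and thereby delivers uniqueness.
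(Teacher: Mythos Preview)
Your approach is essentially identical to the paper's: both show $f(0)<0$ via \eq{eta}, $f(\beta)\to\infty$ via (\ref{eq10}) together with $\varphi_k(\beta)\to\infty$ (the paper sends the integral to $0$ by dominated convergence rather than bounding it uniformly, but the effect is the same), and then invoke convexity. One small remark: strict convexity is not actually the crucial ingredient---once $f(0)<0$, ordinary convexity already forces a unique positive root (if $f(\beta^*)=0$ then the chord from $(0,f(0))$ to $(\beta^*,0)$ pushes $f$ strictly above $0$ for all $\beta>\beta^*$), which is exactly how the paper argues; you also correctly flag the all-zero edge case, which the paper's proof silently sidesteps by assuming $\alpha_{k+1},\ldots,\alpha_n$ are not all zero.
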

\begin{proof}
  Since $X_i$ is not identically zero for each $2\le i\le n$, then, when $\alpha_{k+1},\ldots,\alpha_n$ are not all zero,
it follows from (\ref{eq10}) and the fact that $c_2,\ldots,c_n \ge 0$, that
  \begin{equation}
  \varphi(0,\ldots,0,\alpha_{k+1},\ldots,\alpha_n)=-\sum_{i=k+1}^nc_i\alpha_i-\int_{\mathbb{R}_+^n}
  \left(1-e^{-\sum_{i=k+1}^n\alpha_ix_i}\right)\nu(\de x)<0\,.
\label{11}
   \end{equation}
Since $\sum_{i=1}^kX_i$ is not a subordinator, as $\beta \rightarrow \infty$ we have $\varphi_k(\beta)\to \infty$ and
 \begin{equation}
  \int_{\mathbb{R}_+^n}e^{-\beta\sum_{i=1}^kx_i}\left(1-e^{-\sum_{i=k+1}^n\alpha_ix_i}\right)\nu(\de x)\to 0 ,
  \end{equation}
  (dominated convergence) and thus, cf.\ (\ref{eq10}),
  $\varphi(\beta,\ldots,\beta,\alpha_{k+1},\ldots,\alpha_n)\to\infty$. Therefore, since $\varphi$ is convex (hence, $\varphi(\beta,\ldots,\beta,\alpha_{k+1},\ldots,\alpha_n)$ is convex in $\beta$),
and $\varphi(0,\ldots,0,\alpha_{k+1},\ldots,\alpha_n) <0$ according to (\ref{11}),
the statement of the lemma follows.
\end{proof}
\noindent
Now assume that $Y_i=\sum_{j=1}^i X_j$ for $1\le i\le n$. Then, for each $t\ge 0$, we have for each $s<t$ that
\begin{equation}
Y_1(t)-Y_1(s)\le Y_2(t)-Y_2(s)\le\ldots\le Y_n(t)-Y_n(s)\ .
\end{equation}
Clearly, $Y =\{(Y_1(t),\ldots,Y_n(t))|t \ge 0\}$ is also a L\'evy process with Laplace exponent
\begin{equation}
\tilde\varphi(\alpha)=\varphi(\alpha_1+\ldots+\alpha_n,\alpha_2+\ldots+\alpha_n,\ldots,\alpha_n)\ .
\end{equation}
Lemma~\ref{Lemma1} implies that for
\begin{equation}\label{eq:beta}
\beta=\psi_k(\alpha_{k+1}+\ldots\alpha_n,\ldots,\alpha_{n-1}+\alpha_n,\alpha_n)
-\sum_{i=k+1}^n\alpha_i\,,
\end{equation}
$\tilde\varphi(0,\ldots,0,\beta,\alpha_{k+1},\ldots,\alpha_n)$ is zero.

We are now ready to study the buffer content of a system of $n$ parallel fluid queues, with net input $Y$.
Let
\begin{align}
L_i(t)=-\inf_{0\le s\le t}Y_i(s), ~~~
Z_i(t)=Y_i(t)+L_i(t)\,.
\end{align}
Notice that $Z_i(t)$ can be viewed as the buffer content
of a queue with net L\'evy input $Y_i(t) = X_1(t)+\ldots + X_i(t)$, $i=1,\ldots,n$,
and $L_i(t)$ is the local time at level $0$ of that queue.

We necessarily have (see Theorem~6 of~\cite{kw1996}) that $Z_i(t)\le Z_{i+1}(t)$ for $1\le i\le n-1$.
Thus, if we assume that $EY_n(1)<0$ (hence $EY_i(1)<0$ for all $1\le i\le n$), then our system of $n$ parallel fluid queues has a stationary distribution.
We shall now determine the LST (Laplace-Stieltjes transform) of the steady state workload vector, to be denoted by $Z^*$.

\begin{theorem}
\label{thm1}
The LST of the steady state workload vector $Z^*$ is given by
\begin{equation}\label{eq:LST}
\tilde\varphi(\alpha)Ee^{-\alpha^TZ^*}=\sum_{k=1}^{n-1}\alpha_kf_k(\alpha_{k+1},\ldots,\alpha_n)+\alpha_nf_n\,,
\end{equation}
where $f_n=-EY_n(1)$ is constant and
$f_k(\alpha_{k+1},\ldots,\alpha_n)$
are recursively given by
\begin{equation}\label{eq:fk}
f_k(\alpha_{k+1},\ldots,\alpha_n)
=\frac{\sum_{i=k+1}^{n-1}\alpha_if_i(\alpha_{i+1},\ldots,\alpha_n)+\alpha_nf_n}
{\sum_{i=k+1}^n\alpha_i-\psi_k(\alpha_{k+1}+\ldots+\alpha_n,\ldots,\alpha_{n-1}+\alpha_n,\alpha_n)}\,,
\end{equation}
where an empty sum is defined to be zero.
\end{theorem}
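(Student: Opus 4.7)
My plan is to apply the multivariate Kella--Whitt martingale to the reflected L\'evy process $Z=Y+L$ in steady state and then identify the resulting unknown functionals by evaluating at the zeros of $\tilde\varphi$ furnished by Lemma~\ref{Lemma1}. For any $\alpha\in\mathbb{R}_+^n$, the multivariate Kella--Whitt identity states that
\[
\tilde\varphi(\alpha)\int_0^t e^{-\alpha^TZ(s)}\,\de s + e^{-\alpha^TZ(0)} - e^{-\alpha^TZ(t)} - \sum_{k=1}^n \alpha_k\int_0^t e^{-\alpha^TZ(s-)}\,\de L_k(s)
\]
is a zero-mean martingale; this is applicable since each $Y_k$ is spectrally positive, so each $L_k$ is continuous. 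Starting $Z$ in its stationary distribution (which exists because $EY_n(1)<0$ forces $EY_i(1)<0$ for every $i$), the boundary terms cancel in expectation; dividing by $t$ and using stationarity produces the master identity
\[
\tilde\varphi(\alpha)\,Ee^{-\alpha^TZ^*}\;=\;\sum_{k=1}^n \alpha_k\,E\!\int_0^1 e^{-\alpha^TZ^*(s-)}\,\de L_k(s).
\]

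The second step is to exploit the ordering $Z_1\le Z_2\le\cdots\le Z_n$ from \cite{kw1996}: since $L_k$ only increases when $Z_k=0$, whenever the $k$-th integral receives mass we have $Z_1=\cdots=Z_k=0$, so the integrand collapses to $e^{-\sum_{i=k+1}^n\alpha_iZ_i^*(s-)}$. Defining
\[
f_k(\alpha_{k+1},\ldots,\alpha_n)\;=\;E\!\int_0^1 e^{-\sum_{i=k+1}^n\alpha_iZ_i^*(s-)}\,\de L_k(s),\qquad 1\le k\le n-1,
\]
and $f_n=EL_n(1)$, the master identity becomes exactly (\ref{eq:LST}). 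The constant $f_n$ is identified via stationary drift balance for the one-dimensional reflected process $Z_n$, namely $0=EY_n(1)+EL_n(1)$, yielding $f_n=-EY_n(1)$.

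The third step is the recursion for the remaining $f_k$. Fix $\alpha_{k+1},\ldots,\alpha_n\ge 0$ not all zero, and substitute into (\ref{eq:LST}) the vector $\hat\alpha$ with $\hat\alpha_i=0$ for $i<k$, $\hat\alpha_k=\beta$ (where $\beta$ is given by (\ref{eq:beta})), and $\hat\alpha_i=\alpha_i$ for $i>k$. The discussion immediately following Lemma~\ref{Lemma1} shows $\tilde\varphi(\hat\alpha)=0$, so the left-hand side vanishes and what remains is
\[
0\;=\;\beta\,f_k(\alpha_{k+1},\ldots,\alpha_n)+\sum_{i=k+1}^{n-1}\alpha_i f_i(\alpha_{i+1},\ldots,\alpha_n)+\alpha_n f_n.
\]
Solving for $f_k$ and using $-\beta=\sum_{i=k+1}^n\alpha_i-\psi_k(\alpha_{k+1}+\cdots+\alpha_n,\ldots,\alpha_{n-1}+\alpha_n,\alpha_n)$ gives (\ref{eq:fk}).

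The main technical care is needed in Step~1: verifying the hypotheses to apply the multivariate Kella--Whitt martingale (spectral positivity of the $Y_k$'s, continuity of the $L_k$'s), to take expectations (uniform integrability of the integrand), and to pass to the stationary limit. Once the master identity is in hand, Steps~2 and~3 are algebraic, driven by the two structural inputs already available: the ordering of the $Z_i$ collapses the local-time integrands, and Lemma~\ref{Lemma1} furnishes exactly the right test vector to annihilate the transform side. As a consistency check, letting $\alpha_{k+1},\ldots,\alpha_n\to 0$ in (\ref{eq:fk}) produces a $0/0$ which, via a L'H\^opital-style linearization of $\psi_k$ around the origin (the implicit function theorem applied to $\varphi(\beta,\ldots,\beta,\cdot)=0$ at $\beta=0$), resolves to the boundary value $f_k(0,\ldots,0)=-EY_k(1)=EL_k(1)$ predicted by the definition in Step~2.
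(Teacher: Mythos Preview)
Your approach is essentially the paper's: the master identity you derive via the Kella--Whitt martingale is exactly what the paper imports from (2.12) of \cite{kella1993}, and your Step~3 substitution to kill the left-hand side is the same recursion argument the paper uses.

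One technical point you have not addressed, but which the paper does handle explicitly: in Step~3 the coordinate $\hat\alpha_k=\beta=\psi_k(\cdot)-\sum_{i>k}\alpha_i$ can be negative, so the master identity is being invoked outside the cone $\mathbb{R}_+^n$ on which you established it in Step~1. The paper deals with this by observing that on the larger region $\{\alpha:\sum_{i\ge k}\alpha_i\ge 0\text{ for all }k\}$ the ordering $Z_1^*\le\cdots\le Z_n^*$ gives
\[
\alpha^TZ^*=\sum_{k=1}^n\Bigl(\sum_{i=k}^n\alpha_i\Bigr)(Z_k^*-Z_{k-1}^*)\ge 0,
\]
so $e^{-\alpha^TZ^*}\le 1$ and the identity (and the Laplace exponent $\tilde\varphi$, which is $\varphi$ evaluated at nonnegative arguments precisely on this region) remain valid there. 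You should either run your Step~1 directly on this cone, or note the analytic extension; otherwise Step~3 does not follow from Step~1 as written.
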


\begin{proof}
If $Z^*$ has this distribution, then, since $Z_1^*\le\ldots\le Z_n^*$ (hence, $Z_i^*=0$ implies that $Z_j^*=0$ for $1\le j\le i$),  as in (2.12) of \cite{kella1993}, (\ref{eq:LST}) is satisfied for all $\alpha_1,\ldots,\alpha_n$ satisfying $\sum_{i=k}^n\alpha_i\ge 0$ for all $k=1,\ldots,n$.
Below we first determine $f_n$, and thereafter we show how $f_{n-1},\ldots,f_1$ can successively be determined;
$f_{n-1}$ is expressed in $f_n$, then $f_{n-2}$ is expressed in $f_{n-1}$ and $f_n$, etc.
Letting $\alpha_1=\ldots=\alpha_{n-1}=0$, we have, upon dividing by $\alpha_n$ and letting $\alpha_n\downarrow 0$, that
\begin{equation}
f_n=\frac{\partial \tilde\varphi}{\partial \alpha_n}(0)=-EY_n(1)=-\sum_{i=1}^n EX_i(1)\ .
\end{equation}
Now, note that if we let $\alpha_1=\ldots=\alpha_{k-1}=0$ then we have the formula
\begin{equation}\label{eq:Z}
\tilde\varphi(0,\ldots,0,\alpha_k,\ldots,\alpha_n)Ee^{-\sum_{i=k}^n\alpha_iZ^*_i}
=\sum_{i=k}^{n-1}\alpha_if_i(\alpha_{i+1},\ldots,\alpha_n)+\alpha_nf_n\ .
\end{equation}
Assume that $f_i(\alpha_{i+1},\ldots,\alpha_n)$ are known for $i=k+1,\ldots,n-1$. Set $\alpha_k$ to be the right hand side of \eq{beta}. Recall that $Z^*_{i-1}\le Z^*_i$ for $2\le i\le n$. Therefore
\begin{equation}
\sum_{i=k}^n\alpha_i Z_i^*=\psi_k(\alpha_{k+1}+\ldots,\alpha_n,\alpha_{n-1}+\alpha_n,\alpha_n)Z^*_k+\sum_{i=k+1}^n\alpha_i(Z_i^*-Z_k^*)  \geq 0,
\end{equation}
so that $Ee^{-\sum_{i=k}^n\alpha_iZ_i^*}\le 1$ for our choice of $\alpha_k$ (remember that we do not demand that all $\alpha_i \geq 0$).
This implies (\ref{eq:fk}).
In principle, we can thus compute the right side of \eq{LST} and hence $Ee^{-\alpha^TZ^*}$ for all $\alpha_1,\ldots,\alpha_n$ satisfying $\sum_{i=k}^n\alpha_i\ge 0$ for all $k=1,\ldots,n$.
It follows from (2.12) of \cite{kella1993}
that
$f_k(\alpha_{k+1},\ldots,\alpha_n)$
is a constant times some (joint) Laplace-Stieltjes transform for every $1\le k\le n-1$.
We shall also see this in Corollary~\ref{cor1}.
\end{proof}
\noindent
It should be mentioned that the above theorem generalizes Section~3 of \cite{kella1993} in two ways. The first is that it is not necessary to assume that $X_1$ is a subordinator minus a drift and the second is that it is not necessary to assume that $X_1,\ldots,X_n$ are independent.
It also generalizes results from \cite{bbrw2014} from the compound Poisson setting to the more general L\'evy subordinator setting.
The latter paper considers a system of $n$ queues which simultaneously receive input from an $n$-dimensional
compound Poisson process, and the jump sizes of the simultaneous jumps are stochastically ordered.
The steady state joint workload LST of that system is determined in \cite{bbrw2014}.
\\

\noindent
In \cite{bbrw2014}, furthermore, a decomposition is presented for the $n$-dimensional workload LST, and the terms of this decomposition are given an interpretation.
In the remainder of this section, our aims are to also decompose the $n$-dimensional LST of $Z^*$ into a product of terms,
and to give interpretations of these terms.
In particular, it would be nice to understand the meaning of $\varphi_k(\alpha_{k+1},\ldots,\alpha_n)$. For this it would suffice to consider $k=1$. We follow ideas from \cite{kella1993} where some unnecessary independence assumptions were made, but with a new observation which was missed at the time. The first thing to observe is the following.
Let
\begin{equation}
T_1(x)=\inf\{t|\,X_1(t)=-x\}.
\end{equation}
Then it is well known that $T_1(x)$ is a.s. finite for each $x\ge 0$ and in fact $\{T_1(x)|\,x\ge 0\}$ is a subordinator with Laplace exponent $-\varphi^{-1}_1(\cdot)$, where $\varphi^{-1}_1$ is the inverse of the strictly increasing and continuous function $\varphi_1$ (from $\mathbb{R}_+$ to $\mathbb{R}_+$).

Now, since
$e^{-\sum_{i=1}^n \alpha_i X_i(t)-\varphi(\alpha)t}$ is a mean one martingale (Wald martingale)
for each
$\alpha\in \mathbb{R}^n_+$,
this holds in particular for $\alpha_1=\psi_1(\alpha_2,\ldots,\alpha_n)$ in which case $\varphi(\alpha)=0$. Since $T_1(x)$ is a stopping time, the optional stopping theorem  implies that
\begin{equation}
E\exp\left(-\psi_1(\alpha_2,\ldots,\alpha_n)X_1(T_1(x)\wedge t)-\sum_{i=2}^n\alpha_iX_i(T_1(x)\wedge t)\right)=1\ .
\end{equation}
Noting that $\psi_1(\alpha_2,\ldots,\alpha_n)\ge 0$ and $X_1(T_1(x)\wedge t)\ge -x$, and that $X_i(T_1(x)\wedge t)\ge 0$ for each $2\le i\le n$, this implies by bounded convergence and the fact that $X_1(T_1(x))=-x$, that
\begin{equation}
Ee^{-\sum_{i=2}^n\alpha_iX_i(T_1(x))}=e^{-\psi_1(\alpha_2,\ldots,\alpha_n)x}\,.
\label{eq25}
\end{equation}
In \cite{kella1993} it was explained why $\{(X_2(T_1(x)),\ldots,X_n(T_1(x)))|,x\ge 0\}$ is a (nondecreasing) L\'evy process where it was stated that it is important that $X_2,\ldots,X_n$ are independent. This statement about independence was an oversight, as with a similar argument it holds that it is a L\'evy process without any independence assumptions. Formula (\ref{eq25}) implies that $-\psi_1(\alpha_2,\ldots,\alpha_n)$ is in fact the Laplace exponent of this L\'evy process. If $X_1$ is independent of $(X_2,\ldots,X_n)$ then, recalling \eq{eta},
\begin{equation}
E\exp\left(-\sum_{i=2}^n\alpha_iX_i(T_1(x))\right)=  E ~ e^{-\eta(\alpha_2,\ldots,\alpha_n)T_1(x)}
=e^{-\varphi_1^{-1}(\eta(\alpha_2,\ldots,\alpha_n))x} ,
\end{equation}
and thus for this case
\begin{equation}
\psi_1(\alpha_2,\ldots,\alpha_n)=\varphi^{-1}_1(\eta(\alpha_2,\ldots,\alpha_n))\,,
\end{equation}
which is what appears in \cite{kella1993} (with different notations). In fact, since $X_1(T_1(x))=-x$, we clearly have that $\{X(T_1(x))|\,x\ge 0\}$ is an $n$-dimensional L\'evy process with Laplace exponent $\alpha_1-\psi_1(\alpha_2,\ldots,\alpha_n)$. In particular, this implies that the Laplace exponent of $\{Y(T_1(x))|\,x\ge0\}$ is
\begin{equation}
\sum_{i=1}^n\alpha_i-\psi_1\left(\alpha_2 + \ldots + \alpha_n,\ldots,\alpha_{n-1}+\alpha_n,\alpha_n\right)\ .
\end{equation}
At this point it would be good to refer to \eq{beta}.

Now, we note that for $2\le i\le n$,
\begin{equation}
\inf_{0\le s\le T_1(x)}Y_i(s)=\inf_{0\le y\le x}Y_i(T_1(y))
\end{equation}
and thus
\begin{equation}
Z_i(T_1(x))=Y_i(T_1(x))-\inf_{0\le y\le x}Y_i(T_1(y))\,.
\end{equation}

In addition to the formal proof given in \cite{kella1993}, note that if $0\le s\le T_1(x)$ and
\begin{equation}
s\not\in \{T_1(y)|\,0\le y\le  x\}
\end{equation}
then, since $T_1(\cdot)$ is right continuous and $T_1(0)=0$, there must be some $y\in[0,x]$ for which $T_1(y)<s$ and $X_1(T_1(y))=-y<X_1(s)$. Since $X_2,\ldots,X_n$ are nondecreasing, $Y_i(T_1(y))<Y_i(s)$.
This means that the only contenders to minimize $Y_i$ on the interval $[0,T_1(x)]$ are $\{T_1(y)|\,0\le y\le x\}$.

Similar ideas imply that, with $T_k(x)=\inf\{t|\,Y_k(t)=-x\}$, \\ $\{(X_{k+1}(T_k(x)),\ldots,X_n(T_k(x)))|\,x\ge0\}$ is a subordinator with Laplace exponent $-\psi_k(\alpha_{k+1},\ldots,\alpha_n)$ and for every $k+1\le i\le n$ it follows that
\begin{equation}
Z_i(T_k(x))=Y_i(T_k(x))-\inf_{0\le y\le x}Y_i(T_k(y)),
\end{equation}
and we also observe that $Y_i(T_k(x))=\sum_{j=k+1}^iX_j(T_k(x))-x$ for $k+1 \le i \le n$.
That is, for each $k+1 \le i \le n$, $Y_i(T_k(x))$ is a subordinator minus a unit drift.

Letting $Z^{k*}$ denote a random vector having the limiting distribution of $Z(T_k(x))$, noting that necessarily $Z_1^{k*}=\ldots=Z_k^{k*}=0$, then from Corollary~2.3 of \cite{kella1993} we have that
\begin{equation}\label{eq:Zstar}
f_k(\alpha_{k+1},\ldots,\alpha_n)=-EY_k(1)E\exp\left(-\sum_{i=k+1}^n\alpha_iZ^{k*}_i\right)\ .
\end{equation}
The following corollary is now implied by Theorem~\ref{thm1}.
\begin{corollary}
\label{cor1}
\begin{equation}
\tilde\varphi(\alpha)Ee^{-\alpha^TZ^*}=\sum_{k=1}^{n}\alpha_k (-EY_k(1))
E\exp\left(-\sum_{i=k+1}^n\alpha_iZ^{k*}_i\right)\ ,
\end{equation}
with an empty sum being zero.
\end{corollary}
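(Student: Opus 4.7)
The plan is that this corollary is essentially a one-line bookkeeping step once Theorem~\ref{thm1} and equation~\eq{Zstar} are in hand. I would write Theorem~\ref{thm1} as
\begin{equation*}
\tilde\varphi(\alpha)Ee^{-\alpha^TZ^*}=\sum_{k=1}^{n-1}\alpha_kf_k(\alpha_{k+1},\ldots,\alpha_n)+\alpha_nf_n,
\end{equation*}
and then substitute the probabilistic identification $f_k(\alpha_{k+1},\ldots,\alpha_n)=-EY_k(1)\,E\exp(-\sum_{i=k+1}^n\alpha_iZ^{k*}_i)$ from~\eq{Zstar} into every term with $k\le n-1$. To fold the remaining term $\alpha_nf_n$ into the $k=n$ summand of the corollary, I would invoke the stated empty-sum convention: with $\sum_{i=n+1}^n\alpha_iZ_i^{n*}:=0$ the exponential factor equals $1$, and since $f_n=-EY_n(1)$ was computed at the start of the proof of Theorem~\ref{thm1} by differentiating $\tilde\varphi$ at the origin, the expression $\alpha_n(-EY_n(1))E\exp(0)$ is exactly the term $\alpha_n f_n$. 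This gives the claimed identity.

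Since the corollary itself is thus immediate from Theorem~\ref{thm1} and~\eq{Zstar}, the real content to justify is~\eq{Zstar}. For this I would follow the argument built up in the paragraphs preceding the corollary. Define $T_k(x)=\inf\{t:Y_k(t)=-x\}$, and observe that $Y_k=\sum_{j=1}^k X_j$ is spectrally positive and not a subordinator (because $X_1$ is not), with Laplace exponent $\varphi_k$, so $\{T_k(x)\}$ is an a.s.\ finite subordinator with Laplace exponent $-\varphi_k^{-1}$. Applying the optional stopping theorem to the Wald martingale $\exp(-\psi_k(\alpha_{k+1},\ldots,\alpha_n)Y_k(t)-\sum_{i=k+1}^n\alpha_iX_i(t))$, exactly as is done for $k=1$ in~\eq{eq25}, and using bounded convergence together with $Y_k(T_k(x))=-x$, yields that $\{(X_{k+1}(T_k(x)),\ldots,X_n(T_k(x)))|x\ge 0\}$ is a subordinator with Laplace exponent $-\psi_k(\alpha_{k+1},\ldots,\alpha_n)$.

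Next I would use the identity $Y_i(T_k(x))=\sum_{j=k+1}^i X_j(T_k(x))-x$ for $k+1\le i\le n$ noted in the text: under the time change $t\mapsto T_k(\cdot)$ the components $Y_{k+1},\ldots,Y_n$ become "subordinator minus unit drift", which is precisely the setting of~\cite{kella1993}. Corollary~2.3 there then gives the steady-state joint LST of $(Z_{k+1}(T_k(\cdot)),\ldots,Z_n(T_k(\cdot)))$ in the form of a mean-drift factor times a joint LST. The mean drift for the $k$-th coordinate after inverting the time change works out to $-EY_k(1)$, so the constant prefactor is $-EY_k(1)$ as asserted, giving~\eq{Zstar}.

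The step I expect to need the most care is the last one: cleanly transferring the Kella~1993 steady-state identity to the time-changed process $Y(T_k(\cdot))$ and checking that the correct normalization $-EY_k(1)$ (as opposed to, say, a derivative of $\psi_k$ at zero) appears. Everything else is a routine generalization of the $k=1$ computation in the excerpt: the Wald martingale/optional stopping argument is identical in form, and the substitution into Theorem~\ref{thm1} is purely algebraic.
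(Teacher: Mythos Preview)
Your proposal is correct and mirrors the paper's own argument: the corollary is obtained by substituting the identification \eq{Zstar} (and $f_n=-EY_n(1)$ with the empty-sum convention) into Theorem~\ref{thm1}, and \eq{Zstar} itself is justified exactly as you outline, via the Wald martingale/optional stopping argument for $T_k(\cdot)$, the observation that $Y_i(T_k(x))$ is a subordinator minus unit drift, and an appeal to Corollary~2.3 of~\cite{kella1993}. The only caveat is your phrasing ``the mean drift \ldots\ works out to $-EY_k(1)$'': the constant $-EY_k(1)$ is really the rate of growth of the local time $L_k$ (equivalently $1/ET_k(1)=\varphi_k'(0)$), not the drift of the time-changed process, but you have already flagged this normalization as the step needing care, and the paper likewise leaves it to the cited corollary.
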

This confirms the statement, made in the proof of Theorem~\ref{thm1}, that all $f_k$ are some LST  of a (joint) distribution,
up to a multiplicative constant.

We emphasize that $Z^{k*}$ is a vector of workloads having the steady state distribution associated with the vector of workloads process embedded at time instants where station $k$ (hence also stations $1,\ldots,k-1$) is empty. For what follows, we recall that if $-\xi(\beta)$ is the Laplace exponent of some one-dimensional subordinator, then for some $b\in\mathbb{R}_+$ and L\'evy measure $\mu$ satisfying $\int_{\mathbb{R}_+}(u\wedge 1)\mu(\de u)<\infty$ we have that
\begin{equation}\label{eq:xi}
\xi(\beta)=b\beta+\int_{\mathbb{R}_+}\left(1-e^{-\beta u}\right)\mu(\de u) ,
\end{equation}
and when the mean $\xi'(0)$ is finite, then $\frac{\xi(\beta)}{\beta\xi'(0)}$ is the LST of the mixture, with weight factors
\begin{equation}\label{eq:mix}
\left(\frac{b}{b+\int_0^\infty\mu(u,\infty)\de u}\,,\,
\frac{\int_0^\infty\mu(u,\infty)\de u}{b+\int_0^\infty\mu(u,\infty)\de u}\right),
\end{equation}
of zero and a (residual lifetime) distribution having the density
\\
$\mu(x,\infty)/\int_0^\infty\mu(u,\infty)\de u$. See, e.g., (4.6) of \cite{kella1996}.

We first discuss the workload decomposition for the case $n=2$, exposing the key ideas;
thereafter, we briefly treat the case of a general $n$ via a repetition of the argument.
\\

\noindent
{\bf Workload decomposition for the $2$-dimensional case}
\\
For the case $n=2$ we first note that from $\varphi(\psi_1(\alpha_2),\alpha_2)=0$ it follows that with \begin{equation}
\eta_2(\alpha_2)=-\varphi(0,\alpha_2)=c_2\alpha_2+\int_{\mathbb{R}_+}\left(1-e^{-\alpha_2 x}\right)\nu_2(\de x) ,
\end{equation}
we have that
\begin{equation}
\psi_1'(0)=\frac{\eta_2'(0)}{\varphi_1'(0)} .
\end{equation}
It may be seen after some very simple manipulations, that for
$\alpha_2\ge 0$ and $\alpha_1\ge -\alpha_2$, with
$\alpha_1\not=\psi(\alpha_2)$:
\begin{equation}\label{Z1Z2}
Ee^{-\alpha_1Z_1^*-\alpha_2Z_2^*}=\frac{\varphi_1'(0)(\alpha_1+\alpha_2-\psi_1(\alpha_2))}
{\varphi(\alpha_1+\alpha_2,\alpha_2)}
\cdot\frac{(1-\psi_1'(0))\alpha_2}{\alpha_2-\psi_1(\alpha_2)}\ .
\end{equation}
An identical formula is given, employing different methods, in Proposition~1 of \cite{bbrw2014}.
That paper restricts itself to the special case where, with $J$ a two-dimensional compound Poisson process
with nonnegative (two-dimensional) jumps, we have $X(t) = J(t)-(t,0)$ (so that $Y(t) = (J_1(t)-t,J_1(t)+J_2(t)-t)$).

If we denote $W_1^*=Z_1^*$ and $W_2^*=Z_2^*-Z_1^*$ (nonnegative) then
\begin{equation}\label{eq:W1W2}
Ee^{-\alpha_1W_1^*-\alpha_2W_2^*}=\frac{\varphi_1'(0)(\alpha_1-\psi_1(\alpha_2))}
{\varphi(\alpha_1,\alpha_2)}
\cdot\frac{(1-\psi_1'(0))\alpha_2}{\alpha_2-\psi_1(\alpha_2)}\ .
\end{equation}
The second expression in the product on the right hand side is the LST of $W_2^{1*} ~= ~ Z_2^{1*}$ (which has the steady state distribution of the workload in station $2$ observed only at times when station $1$ is empty)
and from its form (generalized Pollaczek-Khinchin formula, {\em e.g.,} (2.5) of \cite{kella1996} among many others), it is indeed the steady state LST of a reflected process of a L\'evy fluid queue with subordinator input having the Laplace exponent $-\psi_1(\alpha_2)$ and a processing rate of one.
That is, it is the steady state LST of the reflected process associated with the driving process $\{\hat{J}(x)-x|\,x\ge 0\}$ where $\hat{J}(x)=X_2(T_1(x))$.
In the compound Poisson setting of \cite{bbrw2014}, it was observed to be the steady state LST of the workload in an $M/G/1$ queue with service times
distributed as the extra (compared with queue $1$) workload accumulated in station $2$ during a busy period of station $1$.

Setting $\alpha_1=0$, we can rewrite the resulting equation as follows
\begin{equation}
\frac{\eta_2(\alpha_2)}{\alpha_2\eta_2'(0)}Ee^{-\alpha_2W^*_2}=\frac{\psi_1(\alpha_2)}{\alpha_2\psi_1'(0)}
Ee^{-\alpha_2 W_2^{1*}}\ .
\label{eq41}
\end{equation}
That is, if $\xi_2^*$ and $\xi_2^{1*}$ have LST's $\frac{\eta_2(\alpha)}{\alpha_2\eta_2'(0)}$ and $\frac{\psi_1(\alpha_2)}{\alpha_2\psi_1'(0)}$ (see the paragraph that includes \eq{xi} and \eq{mix}) and are respectively independent of $W_2^*$ and $W_2^{1*}$, then we have the following decomposition:
\begin{equation}
\xi^*_2+W^*_2\sim \xi_2^{1*}+W_2^{1*}\ .
\label{eq42}
\end{equation}
We note that this is different from the decomposition described in Theorem~4.2 of \cite{kella1993} which holds when $X_1$ and $X_2$ are independent processes, but not otherwise. It is easy to check that with this independence, this decomposition holds here as well, where, unlike there, $X_1$ need not be a subordinator minus some drift.

Our next goal is to identify a joint distribution with LST given by the first expression of the product on the right hand side of \eq{W1W2}. First we observe from Corollary~(2.1) of \cite{kella1993} and the facts that $Z_1(T_1(x))=0$, $Z_1(t)=0$ for points of (right) increase of $L_1$ and $Z_1(t)=Z_2(t)=0$ for points of (right) increase of $L_2$ ({\em e.g.,} \cite{kella2006}), that
\begin{align}\label{eq:Z1Z22}
\tilde \varphi(\alpha)E\int_0^{T_1(x)}e^{-\alpha^TZ(s)}{\rm d}s&=Ee^{-\alpha_2 Z_2(T_1(x))}-1+\alpha_2EL_2(T_1(x))
\nonumber \\
&\quad +\alpha_1 E\int_0^{T_1(x)}e^{-\alpha_2 Z_2(s)}{\rm d}L_1(s) .
\end{align}
And in particular this holds when we substitute $\alpha_1 = \psi_1(\alpha_2) - \alpha_2$ (cf.\ (\ref{eq:beta})) in which case $\tilde\varphi(\alpha)=0$.
Therefore, subtracting \eq{Z1Z22} with $\alpha_1=\psi_1(\alpha_2)-\alpha_2$ from \eq{Z1Z22} and noting (see also Corollary~2.1 of \cite{kella1993}) that $\int_0^{T_1(x)}e^{-\alpha_2 Z_2(s)}{\rm d}L_1(s)=\int_0^xe^{-\alpha_2 Z_2(T_1(y))}{\rm d}y$, we have that
\begin{equation}
\tilde \varphi(\alpha)E\int_0^{T_1(x)}e^{-\alpha^TZ(s)}{\rm d}s=(\alpha_1+\alpha_2-\psi_1(\alpha_2))\int_0^xEe^{-\alpha_2 Z_2(T_1(y))}{\rm d}y\,.
\end{equation}
Dividing by $ET_1(x)=x/\varphi_1'(0)$ and recalling that $\tilde\varphi(\alpha)=\varphi(\alpha_1+\alpha_2,\alpha_2)$ now gives
\begin{equation}
\frac{1}{ET_1(x)}E\int_0^{T_1(x)}e^{-\alpha^TZ(s)}{\rm d}s=
\frac{\varphi_1'(0)(\alpha_1+\alpha_2-\psi_1(\alpha_2))}{\varphi(\alpha_1+\alpha_2,\alpha_2)}
\frac{1}{x}\int_0^xEe^{-\alpha_2 Z_2(T_1(y))}{\rm d}y\,.
\label{eq45}
\end{equation}
We now observe two facts. The first is that by regenerative theory the left hand side is the LST of the steady state distribution of a corresponding regenerative process where at time $T_1(x)$ the process is restarted. At this time $Z_1(T_1(x))=0$ and the remaining quantity at the second station is lost. The second fact is that as $x\downarrow 0$ we have by bounded convergence and the fact that $Z(T_1(\cdot))$ is right continuous with $Z(T_1(0))=0$ (since $T_1(0)=0$), that
\begin{equation}
\lim_{x\downarrow 0}\frac{1}{x}\int_0^x Ee^{-\alpha_2Z_2(T_1(y))}{\rm d}y=\lim_{x\downarrow 0}Ee^{-\alpha_2 Z_2(T_1(x))}=1\ .
\end{equation}
Using (\ref{eq45}), this implies that
\begin{equation}
\lim_{x\downarrow 0}\frac{1}{ET_1(x)}E\int_0^{T_1(x)}e^{-\alpha^TZ(s)}{\rm d}s=\frac{\varphi_1'(0)(\alpha_1+\alpha_2-\psi_1(\alpha_2))}{\varphi(\alpha_1+\alpha_2,\alpha_2)} ,
\end{equation}
where the right hand side converges to~$1$ as $(\alpha_1,\alpha_2)\to (0,0)$.
By the continuity theorem for LST's we have that necessarily the right hand side is an LST of a nonnegative random vector (for any $\alpha_2\ge 0$ and $\alpha_1\ge -\alpha_2$).
This also implies that for $\alpha_1,\alpha_2\ge 0$,
with $W(s) =(W_1(s),W_2(s)) := (Z_1(s),Z_2(s)-Z_1(s))$,
\begin{equation}\label{eq:decompW}
\lim_{x\downarrow 0}\frac{1}{ET_1(x)}E\int_0^{T_1(x)}e^{-\alpha^TW(s)}{\rm d}s=\frac{\varphi_1'(0)(\alpha_1-\psi_1(\alpha_2))}{\varphi(\alpha_1,\alpha_2)} ,
\end{equation}
so that this is also an LST. For the compound Poisson case, this implies that this is the LST of a version of this process such that, whenever $Z_1(t)=0$, any quantity available in queue~2 is lost. For this case, this interpretation was discovered in \cite{bbrw2014} and it continues to be valid if $X_1$ is a compound Poisson process with a negative drift and $X_2$ is a general subordinator ($X_1,X_2$ can be dependent).
\\

\noindent
{\bf Remark 1.}
To obtain moments of $W_1^*,W_2^*$ or $Z_1^*=W_1^*$, $Z_2^*=W_1^* + W_2^*$ requires slightly tedious but straightforward calculations.
Firstly, cf.\ (\ref{eq:W1W2}),
\begin{equation}
E({\rm e}^{-\alpha_1 W_1^*}) = \frac{\phi_1'(0)\alpha_1}{\phi(\alpha_1,0)}
\end{equation}
yields
\begin{equation}
EW_1^* = \frac{\phi_1''(0)}{2 \phi_1'(0)} = \frac{{\rm Var} X_1(1)}{-2 EX_1(1)} .
\end{equation}
Secondly, again from (\ref{eq:W1W2}),
\begin{equation}
E({\rm e}^{-\alpha_2 W_2^*}) =
- \frac{\varphi_1'(0)\psi_1(\alpha_2)}
{\varphi(0,\alpha_2)}
\cdot\frac{(1-\psi_1'(0))\alpha_2}{\alpha_2-\psi_1(\alpha_2)}\
\end{equation}
yields
\begin{equation}
EW_2^* =
\frac{\frac{d^2}{dy^2} \phi(0,y)}{2 \frac{d}{dy} \phi(0,y)}|_{y=0}
-\frac{\psi_1''(0)}{2 \psi_1'(0)}
- \frac{\psi_1''(0)}{2(1-\psi_1'(0))} ,
\end{equation}
which term by term corresponds  to (cf.\ (\ref{eq41}) and (\ref{eq42}))
\begin{equation}
EW_2^* = - E \xi_2^{*} + E\xi_2^{1*} + EW_2^{1*} .
\end{equation}
Insertion of
\begin{equation}
\frac{{\rm d}}{{\rm dy}} \phi(0,y)|_{y=0} = -EX_2(1), ~~~ \frac{{\rm d^2}}{{\rm dy^2}} \phi(0,y) = {\rm Var} X_2(1),
\end{equation}
\begin{equation}
\psi_1'(0) = - \frac{EX_2(1)}{EX_1(1)},
\end{equation}
\begin{eqnarray}
\psi_1''(0) &=&
-\frac{\phi_1''(0)}{\phi_1'(0)} (\psi_1'(0))^2 -\frac{\frac{{\rm d^2}}{{\rm dy^2}} \phi(0,y)|_{y=0}}{\phi_1'(0)} - 2\frac{{\rm d^2}}{{\rm dxdy}} \phi(x,y)|_{x=y=0} \frac{\psi_1'(0)}{\phi_1'(0)}
\nonumber
\\
&=&
\frac{{\rm Var} X_1(1)}{EX_1(1)} (\frac{EX_2(1)}{EX_1(1)})^2 + \frac{{\rm Var} X_2(1)}{EX_1(1)}
\nonumber
\\
&-& 2 {\rm cov}(X_1(1),X_2(1)) \frac{EX_2(1)}{(EX_1(1))^2} ,
\end{eqnarray}
allows one to express the moments of $W_1^*$ and $W_2^*$ into the first two moments and covariance of $X_1(1)$ and $X_2(1)$.
Compared to Formula (4.6) of \cite{kella1993}, the expression for $EW_2^*$ contains an extra term that includes ${\rm cov}(X_1(1),X_2(1))$.
\\

\noindent
{\bf Workload decomposition for the $n$-dimensional case}
\\
The above argument may be repeated for the $n$-dimensional case. Indeed, it follows from \eq{Z}, \eq{fk} and \eq{Zstar} that for $\alpha$ such that $\sum_{i=k}^n\alpha_i \ge 0$ for all $k=1,\ldots,n$,
\begin{align}
Ee^{-\alpha^TZ^*}&=\frac{\varphi_1'(0)
\left(\sum_{i=1}^n\alpha_i-\psi_1(\alpha_2+\ldots+\alpha_n,\ldots,\alpha_{n-1}+\alpha_n,\alpha_n)\right)}
{\tilde\varphi(\alpha)}\nonumber\\
&\qquad\cdot E\exp\left(-\sum_{i=2}^n\alpha_iZ_i^{1*}\right)\ ,
\end{align}
or equivalently, introducing $\alpha_i := \alpha_i - \alpha_{i+1}$ for $1 \leq i \leq n-1$, we have  for $\alpha_i\ge 0$ for $i=1,\ldots,n$,
\begin{align}
Ee^{-\alpha^TW^*}&=\frac{\varphi_1'(0)
\left(\alpha_1-\psi_1(\alpha_2,\ldots,\alpha_{n-1},\alpha_n)\right)}
{\varphi(\alpha)}\nonumber\\
&\qquad\cdot E\exp\left(-\sum_{i=2}^n\alpha_i W_i^{1*}\right)\ .
\end{align}
By induction, the right hand side may be written as a product of $n$ (joint) LST's where for $1\le k\le n-1$ the $k$th multiple is
\begin{equation}
\frac{\left(1-\frac{\partial\psi_{k-1}}{\partial\alpha_k}(0)\right)(\alpha_k-\psi_k(\alpha_{k+1},\ldots,\alpha_n))}
{\alpha_k-\psi_{k-1}(\alpha_k,\ldots,\alpha_n)} ,
\end{equation}
and the last one is
\begin{equation}
\frac{(1-\psi_{n-1}'(0))\alpha_n}{\alpha_n-\psi_{n-1}(\alpha_n)}\ .
\end{equation}
From \eq{psik} it follows that for $2\le k\le n$,
\begin{equation}
1-\frac{\partial\psi_{k-1}}{\partial\alpha_k}(0)=\frac{\varphi_1'(0)-\sum_{i=2}^k\eta_i'(0)}
{\varphi_1'(0)-\sum_{i=2}^{k-1}\eta_i'(0)}=\frac{-EY_k(1)}{-EY_{k-1}(1)}\ ,
\end{equation}
where for $k=2$, the empty sum in the denominator is defined to be zero.

For the case where $X_1$ is a subordinator minus a drift and $X_1,\ldots,X_n$ are independent, it seems that this decomposition can be inferred from more general results reported in Theorem~6.1 of \cite{ddr2007}.

Finally, the ideas that led to \eq{decompW} may be repeated to conclude that each multiplicative factor participating in this decomposition is indeed a (joint) LST.
\\

\noindent
{\bf Remark 2}.
The results of this section are also of immediate relevance for a tandem fluid model,
viz., a model of $n$ stations in series, in which material or work leaves each station as a fluid.
Such a connection between parallel stations and stations in series was already pointed out in \cite{kella1993}.
Tandem fluid models were introduced in \cite{kw1992book}.
The following $n$-station tandem fluid model is introduced and studied in that paper.
The input process of the first station is a nondecreasing L\'evy process and the $j$-th station receives the output of the ($j-1$)-st station at a constant rate $r_{j-1}$, as long as that station is not empty.
It was assumed that $r_1 \ge \dots \ge r_n$ to avoid the trivial case that a station is always empty.
That tandem fluid model was generalized in \cite{kella1993} by allowing additional external inputs to stations $2,\dots,n$.
Those inputs were assumed to be subordinators and, for most of the results, they were assumed to be independent from each other and from the input to station $1$.
It was observed in \cite{kella1993} that there is a direct connection between the workloads in this tandem fluid model and the workloads in a model of $n$ parallel stations.
The same observation, but for the case of compound Poisson inputs  (and allowing dependence between the input processes)
was also made in \cite{bbrw2014}.
That connection also extends to the case of dependent external L\'evy inputs.
More precisely, let $X_1,\dots,X_n$ be the external inputs to stations $1,\dots,n$ of the tandem fluid model, with $X_2,\dots,X_n$ being subordinators, and let $W_1,\dots,W_n$ denote their buffer content level processes.
Then we can identify $W_j(t)$ with $ Z_j(t)-Z_{j-1}(t)$, $1 \le j \le n$ (hence $W_1(t)+\dots+W_j(t) = Z_j(t)$), where $Z_j(t)$ as before denotes the buffer content level of station $j$ in the system of parallel stations studied in the present paper, and $Z_0(t) \equiv 0$.
\\

\noindent
{\bf Remark 3}.
In \cite{kella1993} an equivalence between the tandem fluid model and a particular single server priority queue is also pointed out.
Assume a compound Poisson input vector $X$ of customer classes $1,\ldots,n$.
Class $i$ has preemptive resume priority over class $j$ if $i<j$;
the total workload in the first $j$ classes can now be identified with $Z_j^*$.

In \cite{bbrw2014} a multivariate duality relation is established between (i) the model of $n$ parallel $M/G/1$ queues with simultaneous arrivals,
with stochastic ordering of the $n$ service times of each arriving batch
and (ii) a Cram\'er-Lundberg insurance risk model featuring $n$ insurance companies with simultaneous claim arrivals and stochastic ordering of those claims.
In particular, when the arrival processes in both systems have the same distribution, the joint steady-state workload distribution
${\mathbb P}(V_1 \le x_1,\dots,V_n \le x_n)$ in the queueing model equals the survival probability for all companies, with initial capital vector $(x_1,\dots,x_n)$.
This is a generalization of a well-known duality that is discussed, e.g., on  p.\ 46 of \cite{AsmAlb}.
Using the sample path argument presented there, one should also be able to generalize this duality to the case of the L\'evy input process of the present paper,
thus also obtaining the survival probability for $n$ insurance companies with the same $n$-dimensional input process as the $n$ parallel stations
of the present paper.
\\

\noindent
{\bf Acknowledgment}.
The authors thank Liron Ravner for interesting discussions.

\end{document}